\title{The Bousfield localizations and colocalizations of the discrete model structure.}
\date{July 2016.}
\begin{document}
\begin{abstract}
We compute the Bousfield localizations and Bousfield colocalizations of discrete model categories, including the homotopy categories and the algebraic $K$-groups of these localizations and colocalizations. We prove necessary and sufficient conditions for a subcategory of a category to appear as the subcategory of fibrant objects for some such model structure. We also prove necessary and sufficient conditions for a monad to be the fibrant replacement monad of some such model structure.
\end{abstract}

\maketitle
%\tableofcontents

\section{Introduction.}

This short paper answers some natural questions about model categories. 
\begin{questions}\label{motivating question}
\begin{enumerate}
\item Given a category $\mathcal{C}$ and a reflective subcategory $\mathcal{A}$ of $\mathcal{C}$, is there some model structure on $\mathcal{C}$ in which $\mathcal{A}$ is the subcategory of fibrant objects, and such that the reflector functor is fibrant replacement?
\item Given an idempotent monad on a category $\mathcal{C}$, is there some model structure on $\mathcal{C}$ such that the monad is a fibrant replacement monad? 
\item The simplest model structure on a category is the discrete model structure, that is, the model structure in which all maps are cofibrations and fibrations, and the weak equivalences are the isomorphisms.
What are all the Bousfield localizations and Bousfield colocalizations of the discrete model structure?
\item What is the homotopy category of each of those Bousfield localizations and Bousfield colocalizations? 
\item Similarly: which categories are equivalent to the homotopy category of some Bousfield localization or Bousfield colocalization of the discrete model structure on $\mathcal{C}$?
\item What are the algebraic $K$-groups (in the sense of Waldhausen, as in~\cite{MR802796}) of each of those Bousfield localizations and Bousfield colocalizations?
\end{enumerate}
\end{questions}
None of the answers to these questions are difficult to prove, but with the exception of the question about idempotent monads, the answers to these questions apparently do not already appear in the literature; and from these questions repeatedly coming up in dialogue with other homotopy theorists, the answers to these questions do not seem to be as well-known as they ought to be. So we feel that this little paper adds something of use to the literature; see Remark~\ref{remark on why bother}.

Before we give the answers to these questions, 
here is a quick account of what model categories are for, and why you might care about them: a {\em model structure on a category} (originally due to Quillen, in~\cite{MR0223432}; see Conventions~\ref{def of model cat} for a definition, and~\cite{MR1361887} for an excellent introductory account) is the minimal
structure one needs on a category in order to make the usual constructions of homotopy theory. For example, the classical homotopy theory of topological spaces is described by the Serre model structure on topological spaces. There are many other examples of model categories, including many algebraic examples: for example, categories of chain complexes over rings, whose homotopy categories recover derived categories of rings.

Model categories also have a very elegant and powerful theory of localization, {\em Bousfield localization}, for which a good reference is~\cite{MR1944041}. Given a model category $\mathcal{C}$, a {\em Bousfield localization of $\mathcal{C}$} is a model category $\mathcal{C}^{\prime}$ whose underlying category is the same as that of $\mathcal{C}$, whose cofibrations are the same as those of $\mathcal{C}$, and whose collection of weak equivalences contains the collection of weak equivalences of $\mathcal{C}^{\prime}$. Dually, a {\em Bousfield colocalization of $\mathcal{C}$} (also called {\em right Bousfield localization}) is a model category $\mathcal{C}^{\prime}$ whose underlying category is the same as that of $\mathcal{C}$, whose fibrations are the same as those of $\mathcal{C}$, and whose collection of weak equivalences contains the collection of weak equivalences of $\mathcal{C}^{\prime}$.

Now here are the answers we get to Questions~\ref{motivating question}. 
All of these answers are immediate consequences of Theorem~\ref{main thm}, with the exception of the statement on algebraic $K$-groups, which is Theorem~\ref{k-thy thm}. 
\begin{answers}\label{the answers}
Since the underlying category of any model category is finitely complete and finitely co-complete, to get a positive answer to any of the Questions~\ref{motivating question}, $\mathcal{C}$ must have all finite limits and finite colimits; so assume that $\mathcal{C}$ has all finite limits and finite colimits. Suppose furthermore that $\mathcal{C}$ is finitely well-complete, that is, $\mathcal{C}$ admits all intersections of strong monomorphisms; this condition, as well as its dual, holds in the familiar concrete categories of everyday mathematical practice (see Remarks~\ref{remark on fwc property} and~\ref{remark on co-fwc prop} for some explanation and examples of this).
\begin{enumerate}
\item %If $\mathcal{C}$ is finitely well-complete, then 
Given a reflective subcategory $\mathcal{A}$ of $\mathcal{C}$, clearly $\mathcal{A}$ can only be the subcategory of fibrant objects of $\mathcal{C}$ if $\mathcal{A}$ is replete, that is, closed under isomorphisms. %In Theorem~\ref{main thm} we prove that 
The converse is also true: given a replete reflective subcategory of $\mathcal{C}$, there exists a model structure---specifically, a Bousfield localization of the discrete model structure---whose subcategory of fibrant objects is $\mathcal{A}$, and for which any reflector functor $\mathcal{C}\rightarrow \mathcal{A}$ is a fibrant replacement functor. 
%Dually, if $\mathcal{C}$ is finitely co-well-complete, then a coreflective subcategory $\mathcal{A}$ of $\mathcal{C}$ is the subcategory of cofibrant objects of a model structure on $\mathcal{C}$ if and only if $\mathcal{A}$ is replete.
\item Suppose that $\mathbb{T}$ is an idempotent monad on $\mathcal{C}$. %In Theorem~\ref{main thm} we show that, 
%If $\mathcal{C}$ is finitely well-complete, then 
There does indeed exist a model structure on $\mathcal{C}$ such that $\mathbb{T}$ is a fibrant replacement monad for that model structure. (This particular fact can also be proven using the Bousfield-Friedlander construction of model structures from Quillen idempotent monads, as in Theorem~A.7 of~\cite{MR513569} and Theorem~9.3 of~\cite{MR1814075}.)
%Similarly, if $\mathbb{T}$ is an idempotent comonad on $\mathcal{C}$ and $\mathcal{C}$ is finitely co-well-complete, then there exists a model structure on $\mathcal{C}$ for which $\mathbb{T}$ is a cofibrant replacement comonad.
\item %In Theorem~\ref{main thm} we also see that, 
%If $\mathcal{C}$ is finitely well-complete, then 
The collection of Bousfield localizations of $\mathcal{C}$
is in bijection with the collection of replete reflective subcategories of $\mathcal{C}$. This bijection reverses the ordering, in the sense that, if $\mathcal{M},\mathcal{M}^{\prime}$ are Bousfield localizations of the discrete model structure on $\mathcal{C}$ and $\mathcal{M}^{\prime}$ is a Bousfield localization of $\mathcal{M}$, then the replete reflective subcategory of $\mathcal{C}$ associated to $\mathcal{M}$ contains the replete reflective subcategory of $\mathcal{C}$ associated to $\mathcal{M}^{\prime}$.

There are similar bijections between each of the above two partially-ordered collections and the partially-ordered collection of natural equivalence classes of idempotent monads on $\mathcal{C}$; see Theorem~\ref{main thm} for the rigorous statement.
\item Given a Bousfield localization of the discrete model structure on $\mathcal{C}$, its homotopy category is equivalent to the full subcategory of $\mathcal{C}$ generated by the fibrant objects.
\item Given a category $\mathcal{X}$, we can produce a Bousfield localization of the discrete model structure on $\mathcal{C}$ whose homotopy category is equivalent to $\mathcal{X}$ if and only if $\mathcal{X}$ is equivalent to some reflective subcategory of $\mathcal{C}$.
\item If $\mathcal{C}$ is pointed, then any Bousfield localization of the discrete model structure makes $\mathcal{C}$ into a Waldhausen category, and the
Waldhausen $K$-groups $K_*(\mathcal{C}) = \pi_*\left( \Omega \left| wS_{\bullet}\mathcal{C}\right| \right)$ are all trivial.
\end{enumerate}
\end{answers}
The arguments used in this paper dualize without difficulties; in particular, we do not make any use of the small object argument or any cofibrant generation hypotheses. Hence the dual statements of Answers~\ref{the answers} also hold, if we assume that $\mathcal{C}$ is finitely co-well-complete.

As an amusing application, we have Corollary~\ref{specific application}:
let $R\rightarrow S$ be a morphism of commutative rings, and equip $\Mod(R)$ with the discrete model structure. Then there exists a Bousfield localization of 
$\Mod(R)$ for which the base change functor $-\otimes_R S$ is a fibrant replacement monad
if and only if the natural multiplication map $S\otimes_R S \rightarrow S$
is an isomorphism.

\begin{remark}\label{remark on why bother}
A lot of the work of proving the results in this paper is done by the construction of a factorization system from a reflective subcategory from the paper~\cite{MR779198} (see also the more recent paper~\cite{MR2369170} of Rosick\'{y} and Tholen, in which the ideas of \cite{MR779198} were revisited and expanded upon); we recall that result in Theorem~\ref{chk thm}. 
With the aid of that theorem, none of the statements in Answers~\ref{the answers} are difficult to prove. 
We wrote an early version of this paper in December 2011, but we were not sure if the results were already known. (The paper has been completely rewritten and simplified since then.)
After encouragement from several other homotopy theorists who work with model categories and found the results of this paper novel, we decided to submit this paper
for publication, since neither \cite{MR779198} nor \cite{MR2369170} give clean results in 
familiar homotopy-theoretic language (instead of model categories and 
Bousfield localization, 
\cite{MR779198} and \cite{MR2369170} work with torsion theories and factorization systems), so one does not find the classification of the Bousfield localizations
of the discrete model structure on a category, or the computation of 
the homotopy category
and algebraic $K$-theory of all such localizations, in the existing literature. So we feel that the present paper
adds something worthwhile to the literature on the subject.
\end{remark}

We are grateful to C. Weibel for finding errors in two lemmas in an earlier version of this paper.

\begin{convention}\label{def of model cat}
It is worth being clear on some definitions which are not totally consistent in the literature:
\begin{itemize}
\item In this paper, a ``model category'' is a %complete, co-complete 
closed model category in the sense of Quillen's monograph~\cite{MR0223432}, that is (briefly): a finitely complete, finitely co-complete 
category equipped with three closed-under-retracts classes of morphisms called ``cofibrations,'' ``weak equivalences,'' and ``fibrations,'' such that each morphism factors into an acyclic cofibration followed by a fibration, each morphism factors as a cofibration followed by an acyclic fibration, weak equivalences have the two-out-of-three property, the cofibrations are precisely the maps which have the left lifting property with respect to acyclic fibrations, and the acyclic cofibrations are precisely the maps which have the left lifting property with respect ot the fibrations. We do {\em not} assume that a model category has all limits and colimits, we do {\em not} include a specific choice of factorization systems as part of the structure of a model category, and we do {\em not} assume that the factorization systems are functorial. (We will show, in fact, that in the model categories considered in this paper, factorization systems can be found which {\em are} functorial.)
\item In this paper, a subcategory $\mathcal{A}$ of a category $\mathcal{C}$ is said to be {\em reflective} if the inclusion functor $\mathcal{A}\hookrightarrow \mathcal{C}$ {\em is full} and admits a left adjoint. (Not all sources include fullness in the definition of a reflective subcategory; see for example~\cite{MR1712872}.)
\end{itemize}
\end{convention}

\section{From Bousfield localizations of discrete model structures to replete reflective subcategories.}

\begin{lemma}\label{when right-homotopic maps are left-homotopic}
Let $\mathcal{C}$ be a model category which is a Bousfield localization of a discrete model category. 
\begin{enumerate}
\item A map in $\mathcal{C}$ is an acyclic fibration if and only if it is an isomorphism.
\item Two maps in $\mathcal{C}$ with fibrant codomain are left-homotopic if and only if they are
equal.
\item Two maps in $\mathcal{C}$ with fibrant codomain are right-homotopic if and only if they are equal.
\end{enumerate}
\end{lemma}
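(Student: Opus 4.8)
The plan is to prove the two implications separately, with essentially all of the content in the ``only if'' direction. The ``if'' direction is immediate: if $f=g\colon A\to X$, then choosing any path object $X\xrightarrow{s}P\xrightarrow{(d_0,d_1)}X\times X$ for $X$ (which exists by factoring the diagonal $X\to X\times X$ as a weak equivalence followed by a fibration), the composite $sf\colon A\to P$ is a right homotopy from $f$ to $g$, since $d_0(sf)=f$ and $d_1(sf)=f=g$. So it remains to show that two right-homotopic maps into a fibrant object are equal.

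For that direction I would argue directly, in a manner closely analogous to the proof of part~(2), exploiting part~(1). Suppose $f\sim_r g$, witnessed by a homotopy $H\colon A\to P$ into some path object $P$ for $X$. First I would replace $P$ by a good one: factoring $(d_0,d_1)\colon P\to X\times X$ as an acyclic cofibration $u\colon P\to P'$ followed by a fibration $(d_0',d_1')\colon P'\to X\times X$ produces a good path object $X\xrightarrow{us}P'\xrightarrow{(d_0',d_1')}X\times X$, and $uH\colon A\to P'$ is still a right homotopy from $f$ to $g$; so we may assume from the start that $(d_0,d_1)$ is a fibration. The key point is then that each of the two structure maps $d_0,d_1\colon P\to X$ is an acyclic fibration. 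Indeed, since $X$ is fibrant the projections $\mathrm{pr}_0,\mathrm{pr}_1\colon X\times X\to X$ are fibrations, being pullbacks of $X\to 1$ along $X\to 1$, so each $d_i=\mathrm{pr}_i\circ(d_0,d_1)$ is a composite of fibrations, hence a fibration; and $d_i$ is a weak equivalence by the two-out-of-three property, since $d_i s=\mathrm{id}_X$ and $s$ is a weak equivalence.

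By part~(1) of the Lemma, an acyclic fibration in $\mathcal{C}$ is an isomorphism, so $d_0$ and $d_1$ are both isomorphisms. From $d_0 s=\mathrm{id}_X$ and the invertibility of $d_0$ we get $s=d_0^{-1}$, and then $d_1 s=\mathrm{id}_X$ forces $d_1=d_0$. Consequently $f=d_0 H=d_1 H=g$, as desired. The only step requiring any care is the passage to a good path object, which is needed precisely because right-homotopy is defined via an arbitrary path object; after that, everything is forced by part~(1), and I anticipate no genuine obstacle. As an alternative, since every object of $\mathcal{C}$ is cofibrant (all maps being cofibrations) and $X$ is fibrant, one could instead invoke the standard fact that left- and right-homotopy then agree and quote part~(2); but the direct argument above is shorter and keeps the paper self-contained.
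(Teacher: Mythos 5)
Your argument is correct, but note its scope: you only prove part (3), taking parts (1) and (2) as given; since your implicit treatment of those matches the paper's (part (1) in particular is used essentially), the substantive comparison is on part (3), where you take a genuinely different route. The paper deduces (3) from (2) in two lines by citing the standard comparison (Lemma 4.21 of Dwyer--Spalinski) that maps with cofibrant domain and fibrant codomain are left-homotopic if and only if they are right-homotopic, cofibrancy of the domain being automatic here because every map is a cofibration. You instead argue directly with path objects, in effect dualizing the paper's cylinder-object proof of part (2): after upgrading the witnessing path object to a good one (the only delicate step, which you handle correctly), you use fibrancy of the codomain to see that the structure maps $d_0', d_1' \colon P' \to X$ are fibrations (composites of the fibration $(d_0',d_1')$ with the projections, which are fibrations as pullbacks of $X \to 1$), two-out-of-three to see they are weak equivalences, and part (1) to conclude they are isomorphisms satisfying $d_0' = d_1'$, whence $f = d_0'(uH) = d_1'(uH) = g$. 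What your route buys: it is self-contained, requiring no appeal to the left/right-homotopy comparison lemma, and it never invokes cofibrancy of the domain (vacuous generality here, but a cleaner logical footprint). What the paper's route buys: brevity, given the external reference, and an explicit logical reduction of (3) to (2). Your closing alternative---invoking the standard fact that left- and right-homotopy agree and quoting part (2)---is precisely the paper's proof, so you have in effect found both arguments and chosen the other one.
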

\begin{proof}
\begin{enumerate}
\item 
Bousfield localization does not change the cofibrations of the underlying model category, hence it does not change the acyclic fibrations (since they
are determined by the cofibrations). In a discrete model category, the acyclic fibrations are the isomorphisms; hence in any localization of a
discrete model category, the acyclic fibrations are still the isomorphisms.
\item
Let $f,g:A\stackrel{}{\longrightarrow} B$ be two maps in $\mathcal{C}$,
and suppose that $B$ is fibrant. Then any two maps that are left-homotopic are
left-homotopic by a ``very good'' cylinder object (see~\cite{MR1361887}, particularly
Lemma~4.6), so if we assume that $f,g$ are left-homotopic, then
we have a map
$\Cyl(A)\stackrel{h}{\longrightarrow} B$ 
and a factorization
\[ A\coprod A\stackrel{i}{\longrightarrow} \Cyl(A)\stackrel{j}{\longrightarrow} A\]
of the codiagonal map $A\coprod A\rightarrow A$, with $i$ a cofibration and $j$ an acyclic fibration,
and $h\circ i = f\coprod g: A\coprod A\rightarrow B$. But every acyclic fibration in $\mathcal{C}$ is an isomorphism;
so the map $j$ is an isomorphism, so we can factor $f\coprod g$ all the way through the codiagonal map $A\coprod A\rightarrow A$. So $f=g$.
\item Maps with fibrant codomain and cofibrant domain are left-homotopic if and only if they are right-homotopic; see Lemma 4.21 in \cite{MR1361887} a proof of this.
The statement then follows immediately from the previous parts of this lemma.
\end{enumerate}
\end{proof}

\begin{lemma} \label{two maps between fibrant replacements are right-homotopic}
Let $\mathcal{C}$ a model category and let $A\stackrel{f}{\longrightarrow} B$ be a map in $\mathcal{C}$.
Let $g_1,g_2$ be two maps making the diagram
\[\xymatrix{ A\ar[d]^f \ar[r]^{cof, we} & PA %\ar@/_/[d]_g\ar@/^/[d]^{g^{\prime}}
 \ar[d]^{g_i}
 \ar[r]^{fib} & 1\ar[d] \\
 B\ar[r]^{cof, we} & PB\ar[r]^{fib} & 1}\]
commute for $i=1,2$, 
where maps labelled $cof, we, fib$ are cofibrations, weak equivalences, and fibrations in $\mathcal{C}$, respectively.
% i.e., $PA, PB$ are fibrant replacements of $A,B$, respectively. 
Then $g_1$ and $g_2$ are right-homotopic.
\end{lemma}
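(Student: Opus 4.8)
The plan is to construct a right homotopy from $g_1$ to $g_2$ directly, as the solution to a lifting problem. The starting observation is that, although $g_1$ and $g_2$ need not be equal, they become equal after precomposition with the acyclic cofibration $\iota_A \colon A \to PA$ (the map labelled $cof,we$ in the top row): indeed, commutativity of the given diagram for each $i$ forces $g_i \circ \iota_A$ to equal the common composite $A \xrightarrow{f} B \to PB$, so that $g_1 \circ \iota_A = g_2 \circ \iota_A$. Write $h \colon A \to PB$ for this common map.

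Next I would form a good path object for $PB$, that is, factor the diagonal $PB \to PB \times PB$ as a weak equivalence $s \colon PB \to Q$ followed by a fibration $(d_0,d_1) \colon Q \to PB \times PB$; such a factorization exists by the model category axioms. Recall that $d_0 s = d_1 s = \mathrm{id}_{PB}$, and that a right homotopy from $g_1$ to $g_2$ is precisely a map $H \colon PA \to Q$ with $d_0 H = g_1$ and $d_1 H = g_2$.

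Now I would set up the lifting square whose left-hand map is the acyclic cofibration $\iota_A \colon A \to PA$, whose right-hand map is the fibration $(d_0,d_1) \colon Q \to PB \times PB$, whose top map is $s \circ h \colon A \to Q$, and whose bottom map is $(g_1,g_2) \colon PA \to PB \times PB$. This square commutes: along the top-then-right edge one computes $(d_0 s h, d_1 s h) = (h,h)$, while along the left-then-bottom edge one computes $(g_1 \iota_A, g_2 \iota_A) = (h,h)$. Since $\iota_A$ is an acyclic cofibration and $(d_0,d_1)$ is a fibration, the lifting axiom supplies a map $H \colon PA \to Q$ with $(d_0,d_1) \circ H = (g_1,g_2)$, i.e.\ $d_0 H = g_1$ and $d_1 H = g_2$. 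This $H$ is the desired right homotopy.

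The argument is almost entirely formal once the correct lifting square has been identified; the only real content, and hence the step most likely to trip one up, is recognizing that the hypotheses force $g_1$ and $g_2$ to agree on $A$, and that this agreement is exactly the data needed to feed $s \circ h$ into the top of a lifting problem against $(d_0,d_1)$. Note that fibrancy of $PB$ plays no role in this particular statement; all that is used is the acyclic cofibrancy of $\iota_A$ together with the existence of a good path object.
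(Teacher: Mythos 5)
Your proof is correct and is essentially identical to the paper's own argument: both form the path object factorization $PB \to \Path(PB) \to PB\times PB$ of the diagonal and solve the very same lifting problem of the acyclic cofibration $A \to PA$ against the fibration $\Path(PB) \to PB \times PB$, using that $g_1$ and $g_2$ agree after precomposition with $A \to PA$. The only cosmetic difference is that the paper factors the diagonal as an acyclic cofibration followed by a fibration (a ``very good'' path object), while you only require a weak equivalence followed by a fibration, which suffices equally well.
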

\begin{proof}
We have the factorization of the diagonal map
\[ PB\stackrel{cof, we}{\longrightarrow} \Path(PB) \stackrel{fib}{\longrightarrow} PB\times PB\]
and we arrange this composable pair of maps as well as the given maps into 
the commutative diagram:
\[\xymatrix{
 & A \ar[d]\ar[rd]^f \ar[ld]_f & \\
B \ar[rd] \ar[d] & PA\ar[ld]_(0.3){g_1}\ar[rd]^(0.3){g_2} & B\ar[ld] \ar[d] \\
PB & PB\ar[d] & PB \\
 & \Path(PB)\ar[d] & \\
 & PB\times PB\ar[uul]^{\pi_1} \ar[uur]_{\pi_2} 
}\]
and, redrawing a portion of the central axis of this commutative diagram, we have
\[\xymatrix{ A \ar[r]^{cof, we} \ar[d] & PA\ar[d] \\
\Path(PB) \ar[r]^{fib} & PB\times PB.}\]
So there exists a lift $PA\rightarrow \Path(PB)$ of this acyclic cofibration over this fibration. This lift is a right homotopy of $g_1$ and $g_2$.
\end{proof}

\begin{prop}\label{fibrant replacement functor exists}
Let $\mathcal{C}$ be a model category which is a Bousfield localization of a discrete model category.
Then, given a map $A\stackrel{f}{\longrightarrow} B$ in $\mathcal{C}$, there exists a unique map $g$ making the following diagram commute:
\begin{equation}\label{fibrant replacement mapping diagram} \xymatrix{ A\ar[d]^f \ar[r]^{cof, we} & PA\ar[d]^g\ar[r]^{fib} & 1 \ar[d]\\
 B\ar[r]^{cof, we} & PB\ar[r]^{fib} & 1,}\end{equation}
where maps labelled $cof, we, fib$ are cofibrations, weak equivalences, and fibrations in $\mathcal{C}$, respectively. %, i.e., $PA, PB$ are fibrant replacements of $A,B$, respectively. 
As a consequence, $\mathcal{C}$ admits functorial fibrant replacement. More specifically, every choice of fibrant replacements in $\mathcal{C}$ extends
to a fibrant replacement functor.
\end{prop}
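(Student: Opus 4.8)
The plan is to obtain existence of $g$ from the lifting axiom, uniqueness of $g$ from the two preceding lemmas, and then functoriality as a purely formal consequence of the uniqueness. Concretely, writing $\eta_A\colon A \to PA$ and $\eta_B\colon B \to PB$ for the two acyclic cofibrations labelled $cof, we$ in diagram~\eqref{fibrant replacement mapping diagram}, I would first produce $g$ as a solution to a lifting problem and then show it is forced to be unique.

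For existence, I would set up the lifting problem whose left-hand vertical map is the acyclic cofibration $\eta_A\colon A \to PA$ and whose right-hand vertical map is the fibration $PB \to 1$, with top edge the composite $\eta_B\circ f\colon A \to PB$ and bottom edge $PA \to 1$. Since $\mathcal{C}$ is a model category, an acyclic cofibration has the left lifting property against every fibration, so a lift $g\colon PA \to PB$ exists; by construction $g\circ \eta_A = \eta_B\circ f$, which is exactly commutativity of the left-hand square of~\eqref{fibrant replacement mapping diagram}, and the right-hand square commutes trivially since $1$ is terminal. For uniqueness, suppose $g_1,g_2$ both make~\eqref{fibrant replacement mapping diagram} commute. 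By Lemma~\ref{two maps between fibrant replacements are right-homotopic} they are right-homotopic, and since $PB \to 1$ is a fibration the object $PB$ is fibrant, so Lemma~\ref{when right-homotopic maps are left-homotopic}(3) forces $g_1 = g_2$. All the genuinely homotopy-theoretic content has already been isolated in the two lemmas, so this step is just an assembly of them.

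For functoriality, I would fix once and for all an acyclic cofibration $\eta_A\colon A \to PA$ into a fibrant object for each object $A$, and define $P$ on a morphism $f\colon A \to B$ to be the unique map $g$ produced above, so that $Pf\circ \eta_A = \eta_B\circ f$. Preservation of identities and composites is then forced by uniqueness: $\mathrm{id}_{PA}$ solves the defining problem for $\mathrm{id}_A$ and so equals $P(\mathrm{id}_A)$, and for composable $f\colon A\to B$, $h\colon B\to C$ the routine chase $(Ph\circ Pf)\circ \eta_A = Ph\circ\eta_B\circ f = \eta_C\circ h\circ f$ shows that $Ph\circ Pf$ solves the defining problem for $h\circ f$, hence equals $P(h\circ f)$. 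The single identity $Pf\circ\eta_A = \eta_B\circ f$ simultaneously expresses naturality of $\eta$ as a transformation from the identity functor to $P$, so $P$ is a bona fide fibrant replacement functor.

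The main obstacle, such as it is, is conceptual rather than computational. In a general model category the lifting axiom yields a lift only up to homotopy, which is not enough to pin down a strict functor on the nose; one usually needs an explicit functorial factorization or a coherence argument. What rescues the strict statement here is that a localization of a discrete model structure has so few acyclic fibrations (only the isomorphisms, by Lemma~\ref{when right-homotopic maps are left-homotopic}(1)) that right-homotopic maps into a fibrant object must literally coincide. This rigidity is exactly what upgrades an arbitrary objectwise choice of fibrant replacements into an honest functor with no further coherence checks, and recognizing that the uniqueness clause does all of this work is the crux of the argument.
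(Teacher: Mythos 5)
Your proposal is correct and follows essentially the same route as the paper: existence of $g$ from the lifting property of the acyclic cofibration $A \to PA$ against the fibration $PB \to 1$, uniqueness from Lemma~\ref{two maps between fibrant replacements are right-homotopic} combined with Lemma~\ref{when right-homotopic maps are left-homotopic} (fibrant codomain), and functoriality as a formal consequence of uniqueness. Your write-up merely makes explicit the identity/composition check that the paper dismisses as ``easy to check,'' which is a fair expansion rather than a different argument.
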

\begin{proof}
Existence of the desired map is given trivially by the lifting property of an acyclic cofibration over a fibration, and the commutative square
\[ \xymatrix{ 
 A \ar[r]^{cof, we} \ar[d] & PA\ar[d] \\
 PB \ar[r]^{fib} & 1. }\]

The real question is why such a map $PA\rightarrow PB$ is unique.
Let $g,g^{\prime}: PA\rightarrow PB$ be two maps making the above diagram commute. 
By Lemma~\ref{two maps between fibrant replacements are right-homotopic}, $g$ and $g^{\prime}$ are right-homotopic. 
By Lemma~\ref{when right-homotopic maps are left-homotopic}, $g$ and $g^{\prime}$ are hence left-homotopic (since they have
fibrant codomain), hence equal. This implies that any two fibrant replacements for an object in $\mathcal{C}$ are {\em uniquely} isomorphic.

Since the map $g$ filling in diagram~\ref{fibrant replacement mapping diagram} is uniquely determined by $f$, we will write $Pf$ instead of $g$ for this map.
It is easy to check that the assignment $f\mapsto Pf$ preserves composition and identity maps; hence $P$ is a functor $\mathcal{C}\rightarrow \mathcal{C}$ landing in 
the full subcategory generated by the fibrant objects. So $P$ is a fibrant replacement functor.
\end{proof}

\begin{prop}\label{bousfield loc model structures have refl fibrant replacement} Let $\mathcal{C}$ be a model category which is a Bousfield localization of a discrete model category.
Let $\mathcal{A}$ be the full subcategory of $\mathcal{C}$ generated by the fibrant objects. 
Let $G$ be the inclusion functor
$\mathcal{A}\stackrel{G}{\longrightarrow}\mathcal{C}$ and let $P$ be the fibrant replacement functor 
$\mathcal{C}\stackrel{P}{\longrightarrow} \mathcal{A}$ constructed in Proposition~\ref{fibrant replacement functor exists}. 
Then $P$ is left adjoint to $G$. Hence $\mathcal{A}$ is replete and reflective.% and strictly saturated.
\end{prop}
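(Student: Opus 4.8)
The plan is to exhibit the fibrant replacement maps as the unit of the desired adjunction and to verify the universal property directly, bootstrapping off the uniqueness already established in Proposition~\ref{fibrant replacement functor exists}. Write $\eta_A\colon A\to PA$ for the acyclic cofibration (the map labelled $cof,we$) through which the fibrant replacement of $A$ is formed; by the commutativity of diagram~\ref{fibrant replacement mapping diagram}, the maps $\eta_A$ assemble into a natural transformation $\eta\colon \mathrm{id}_{\mathcal{C}}\to GP$. To prove $P\dashv G$ it then suffices to show that for every object $A$ of $\mathcal{C}$, every fibrant object $B$, and every map $h\colon A\to B$, there is a unique $\bar h\colon PA\to B$ with $\bar h\circ\eta_A=h$. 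Since $\mathcal{A}$ is a full subcategory, such a $\bar h$ is automatically a morphism of $\mathcal{A}$, so this is exactly the statement that $\eta_A$ is a universal arrow from $A$ into $\mathcal{A}$, which yields the adjunction with unit $\eta$.

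Existence of $\bar h$ is immediate: $B$ is fibrant, so $B\to 1$ is a fibration, and $\eta_A$ is an acyclic cofibration, so the lifting property produces $\bar h\colon PA\to B$ with $\bar h\circ\eta_A=h$. The real content is uniqueness, and I expect this to be the main obstacle; the trick that makes it painless is to first record the key fact that $\eta_B\colon B\to PB$ is an isomorphism whenever $B$ is fibrant. To see this, lift the identity of $B$ against the fibration $B\to 1$ over the acyclic cofibration $\eta_B$ to obtain a retraction $r\colon PB\to B$ with $r\circ\eta_B=\mathrm{id}_B$. Then $\eta_B r$ and $\mathrm{id}_{PB}$ are both maps $PB\to PB$ satisfying $(-)\circ\eta_B=\eta_B$, so each of them fills diagram~\ref{fibrant replacement mapping diagram} for $f=\mathrm{id}_B$; the uniqueness clause of Proposition~\ref{fibrant replacement functor exists} then forces $\eta_B r=\mathrm{id}_{PB}$. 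Hence $\eta_B$ is invertible with inverse $r$.

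Granting this, uniqueness of $\bar h$ follows by transporting the problem along $\eta_B$ into the already-solved uniqueness of Proposition~\ref{fibrant replacement functor exists}. Given $\bar h_1,\bar h_2\colon PA\to B$ with $\bar h_i\circ\eta_A=h$, the composites $\eta_B\bar h_i\colon PA\to PB$ both satisfy $(\eta_B\bar h_i)\circ\eta_A=\eta_B h$, so both fill diagram~\ref{fibrant replacement mapping diagram} for $f=h$; by uniqueness in Proposition~\ref{fibrant replacement functor exists} they agree, and since $\eta_B$ is a (split) monomorphism we conclude $\bar h_1=\bar h_2$. This establishes the universal property, and hence that $G$ admits $P$ as a left adjoint.

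Finally, I would dispatch repleteness as an easy separate observation, independent of the adjunction: isomorphisms are fibrations and fibrations are closed under composition, so the class of fibrant objects is closed under isomorphism, whence $\mathcal{A}$ is replete. Combining this with the full inclusion $G$ and the left adjoint $P$ gives that $\mathcal{A}$ is a replete reflective subcategory, as required. The only genuinely nontrivial input is the uniqueness step, and its resolution hinges on the two linked facts above: that $\eta_B$ is an isomorphism for fibrant $B$, and that both can be read off from the uniqueness already proved in Proposition~\ref{fibrant replacement functor exists}.
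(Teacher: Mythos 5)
Your proof is correct and follows essentially the same route as the paper: reduce the adjunction $P\dashv G$ to the universal property of $\eta_A\colon A\to PA$ via fullness of $G$, and deduce existence and uniqueness of the factorization from the uniqueness clause of Proposition~\ref{fibrant replacement functor exists} together with the fact that $\eta_B$ is an isomorphism when $B$ is fibrant. The only difference is presentational: you spell out the retraction-plus-uniqueness argument showing $\eta_B$ is invertible, a step the paper leaves implicit by simply marking the arrow $B\to PB$ with $\cong$ in its diagram.
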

\begin{proof}
Since $G$ is a full faithful functor, $P$ is left adjoint to $G$ if and only if,
for every pair of objects $A,B$ of $\mathcal{C}$ with $B$ fibrant, every map $A\rightarrow B$ factors uniquely through $A\rightarrow PA$.
But by Proposition~\ref{fibrant replacement functor exists}, there exists a unique map $g$ filling in the diagram
\begin{equation} \xymatrix{ A\ar[d]^f \ar[r]^{cof, we} & PA\ar[d]^g\ar[r]^{fib} & 1\ar[d] \\
 B\ar[r]^{cof, we}_{\cong} & PB\ar[r]^{fib} & 1.}\end{equation}
This is the desired unique factor map. Hence $P$ is left adjoint to $G$. It is immediate from its definition that $\mathcal{A}$ is also replete.
\end{proof}
\begin{comment} (PROBABLY FINE, NOT CURRENTLY USED)
We now prove that $\mathcal{A} = \mathcal{A}^{\perp\perp}$, i.e., that $\mathcal{A}$ is strictly saturated. 
Let $X\stackrel{f}{\longrightarrow} Y$ be an acyclic cofibration in $\mathcal{C}$. Then, for any fibrant object $Z$ of $\mathcal{C}$, the induced map of 
hom-sets
\[ \hom_{\mathcal{C}}(Y, Z)\rightarrow\hom_{\mathcal{C}}(X, Z)\]
is a surjection since $f$ lifts over the fibration $Z\rightarrow 1$, and furthermore it is a {\em bijection} since
any two lifts of an acyclic cofibration in $\mathcal{C}$ over a fibration with fibrant codomain are right-homotopic,
hence by Lemma~\ref{when right-homotopic maps are left-homotopic} equal. So $f\in \mathcal{A}^{\perp}$.

Now suppose $Z\in \mathcal{A}^{\perp\perp}$ and let 
\begin{equation}\label{factorization square 3}\xymatrix{ X\ar[r]^f_{cof, we} \ar[d] & Y\ar[d] \\ Z\ar[r] & 1}\end{equation}
be a commutative square in $\mathcal{C}$ with $f$ an acyclic cofibration. Then $f\in \mathcal{A}^{\perp}$, so
\[\hom_{\mathcal{C}}(Y, Z)\rightarrow \hom_{\mathcal{C}}(X, Z)\]
is a bijection, so there exists a (unique) lift $Y\rightarrow Z$ filling in diagram~\ref{factorization square 3}. 
So every acyclic cofibration lifts over $Z\rightarrow 1$. So $Z\rightarrow 1$ is a fibration, so $Z\in \mathcal{A}$, 
so $\mathcal{A}^{\perp\perp} \subseteq \mathcal{A}$,
so $\mathcal{A}$ is strictly saturated.
\end{comment}

\section{From replete reflective subcategories to Bousfield localizations of discrete model structures.}

To produce a model structure from a reflective subcategory, we will use a theorem of Cassidy, H\'{e}bert, and Kelly from~\cite{MR779198}.
First we give the necessary definitions, Definitions~\ref{def of fwc} and~\ref{def of fs}, which are fairly well-known:
\begin{definition}\label{def of fwc}
Let $\mathcal{C}$ be a category.
\begin{itemize}
\item Given a collection $\mathcal{E}$ of morphisms in $\mathcal{C}$, we write $\mathcal{E}^{\downarrow}$ for the collection of morphisms in $\mathcal{C}$ with the right lifting property with respect to all maps in $\mathcal{E}$, i.e., $\mathcal{E}^{\downarrow}$ is the collection of maps $f: X \rightarrow Y$ in $\mathcal{C}$ with the property that,
for every commutative square
\begin{equation}\label{comm sq 4308}\xymatrix{
 V \ar[d]_g \ar[r] & X \ar[d]^f \\ W \ar[r] & Y 
}\end{equation}
in $\mathcal{C}$ with $g$ in $\mathcal{E}$, there exists a map $W \rightarrow X$ filling in diagram~\ref{comm sq 4308} and making it commute.
\item Dually, $\mathcal{E}^{\uparrow}$ is the collection of maps with the left lifting property with respect to all maps in $\mathcal{E}$.
\item The {\em strong monomorphisms in $\mathcal{C}$} is the collection of maps $\mathcal{E}^{\downarrow}$, where $\mathcal{E}$ is the collection of all epimorphisms in $\mathcal{C}$.
\item We say that $\mathcal{C}$ is {\em finitely well-complete} if $\mathcal{C}$ has finite limits and intersections of all families of strong monomorphisms, i.e., $\mathcal{C}$ has finite limits and, for every collection (not necessarily a set!) of strong monomorphisms $\{ X_i \stackrel{f_i}{\longrightarrow} Y\}_{i\in I}$ in $\mathcal{C}$, the pullback of the family of maps $f_i$ exists.
\end{itemize}
\end{definition}

\begin{remark}\label{remark on fwc property}
Remember that a category $\mathcal{C}$ is said to be {\em well-powered} if the collection of subobjects of each object forms a set; this condition is satisfied by the usual familiar concrete categories (sets, groups, topological spaces, \dots ). If $\mathcal{C}$ is complete and well-powered, then it is easy to see that $\mathcal{C}$ is finitely well-complete. Consequently most familiar categories used in day-to-day mathematical practice are finitely well-complete. There are also many finitely well-complete categories even aside from those that are complete and well-powered; for example, the category of finite sets (this example, and useful discussion, is given in~\cite{MR779198}).
\end{remark}

\begin{definition}\label{def of fs}
Given a category $\mathcal{C}$, a {\em factorization system on $\mathcal{C}$} is a pair $(\mathcal{E},\mathcal{M})$ of collections of morphisms in $\mathcal{C}$ satisfying the axioms:
\begin{itemize}
\item Given a morphism $f: X \rightarrow Y$ in $\mathcal{C}$, there exists some object $Z$, a map $g: X \rightarrow Z$ in $\mathcal{E}$, and a map $h: Z \rightarrow Y$ in $\mathcal{M}$, such that $h\circ g = f$.
\item We have equalities $\mathcal{E}^{\downarrow} = \mathcal{M}$ and $\mathcal{E} = \mathcal{M}^{\uparrow}$. 
\end{itemize}
\end{definition}
Definition~\ref{def of fs} is sometimes phrased slightly differently: for example, in~\cite{MR779198}, the containment $\mathcal{M}^{\uparrow} \subseteq \mathcal{E}$ is given instead of the equalities $\mathcal{E}^{\downarrow} = \mathcal{M}$ and $\mathcal{E} = \mathcal{M}^{\uparrow}$, but it is promptly (see the discussion at the beginning of section~2 in~\cite{MR779198}) explained that $\mathcal{E}^{\downarrow} = \mathcal{M}$ and $\mathcal{E} = \mathcal{M}^{\uparrow}$ are implied by such a definition.

\begin{theorem} {\bf (Cassidy-H\'{e}bert-Kelly.)}\label{chk thm}
Suppose $\mathcal{C}$ is a finitely well-complete category, and let $\mathcal{A}$ be a reflective subcategory of $\mathcal{C}$. Let $F: \mathcal{C}\rightarrow \mathcal{A}$ denote a reflector functor.  Then there exists a (necessarily unique) factorization system $(\mathcal{E},\mathcal{M})$ on $\mathcal{C}$ with $\mathcal{E}$ the collection of maps $f$ in $\mathcal{C}$ such that $Ff$ is an isomorphism.
\end{theorem}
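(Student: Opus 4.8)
The plan is to prove uniqueness formally and then reduce the entire statement to the existence of $(\mathcal{E},\mathcal{M})$-factorizations, which is the real content. For uniqueness, observe that in any factorization system the two classes determine one another, since the axioms of Definition~\ref{def of fs} force $\mathcal{M} = \mathcal{E}^{\downarrow}$ and $\mathcal{E} = \mathcal{M}^{\uparrow}$. Hence at most one factorization system can have as its left class the prescribed $\mathcal{E}$ (the maps $f$ with $Ff$ an isomorphism), and ``necessarily unique'' is automatic once existence is established. So I would simply \emph{define} $\mathcal{M} := \mathcal{E}^{\downarrow}$ and then verify the two factorization-system axioms: that $\mathcal{E} = \mathcal{M}^{\uparrow}$, and that every map admits an $(\mathcal{E},\mathcal{M})$-factorization.

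First I would record the elementary closure properties of $\mathcal{E}$ that follow purely from $F$ being a functor together with the fact that isomorphisms enjoy the same properties: $\mathcal{E}$ contains all isomorphisms, satisfies two-out-of-three, and is closed under retracts (a retract of an isomorphism is an isomorphism, and $F$ preserves retracts). The inclusion $\mathcal{E} \subseteq \mathcal{M}^{\uparrow} = \mathcal{E}^{\downarrow\uparrow}$ is then formal. For the reverse inclusion I would use the standard retract argument, which presupposes factorizations: given $g \in \mathcal{M}^{\uparrow}$, factor $g = m\circ e$ with $e \in \mathcal{E}$ and $m \in \mathcal{M}$; since $g$ has the left lifting property against $m$, a diagonal lift exhibits $g$ as a retract of $e$ in the arrow category, whence $g \in \mathcal{E}$. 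Thus both remaining axioms reduce to the single assertion that every map factors as a map in $\mathcal{E}$ followed by a map in $\mathcal{M}$, which is the crux.

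To construct factorizations I would reframe the problem: an $(\mathcal{E},\mathcal{M})$-factorization of $f : X \to Y$ is exactly a reflection of the object $f$ of the slice $\mathcal{C}/Y$ into the full subcategory on those maps into $Y$ that lie in $\mathcal{M}$, so it suffices to show this subcategory is reflective for each $Y$. Because $\mathcal{M} = \mathcal{E}^{\downarrow}$ is a right-orthogonal class it is closed under all limits that exist, so the subcategory is closed in $\mathcal{C}/Y$ under the finite limits and the intersections of strong monomorphisms that $\mathcal{C}/Y$ inherits from the finitely well-complete $\mathcal{C}$. In the special case that the reflection is semi-left-exact the reflection is produced in a single step: $m : M \to Y$ is the pullback of $Ff : FX \to FY$ along $\eta_Y$, and $e : X \to M$ is induced by $\eta_X$ and $f$ via the naturality square; one then checks directly, using that restriction along an $\mathcal{E}$-map induces a bijection on hom-sets into any object of $\mathcal{A}$, that $m \in \mathcal{M}$ and $e \in \mathcal{E}$.

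In general, however, a single pullback of the naturality square need not land in $\mathcal{M}$, and here the finite-well-completeness hypothesis does the essential work. The plan is to build the $\mathcal{M}$-part as a (possibly transfinite) descending intersection of strong-monomorphic subobjects of $Y$: starting from the strong-epi/strong-mono factorization of $f$ (which exists precisely because $\mathcal{C}$ is finitely well-complete), repeatedly reflect and re-factor to produce a decreasing chain of subobjects $Z_{\alpha} \hookrightarrow Y$, each equipped with a map $X \to Z_{\alpha}$ in $\mathcal{E}$, taking intersections at limit stages. I expect the main obstacle, and the place the hypothesis is indispensable, to be twofold: showing the comparison maps $X \to Z_{\alpha}$ remain in $\mathcal{E}$ across limit stages, and showing the descending chain of strong subobjects stabilizes, so that the stable value $m_{\infty} : Z_{\infty} \to Y$ genuinely lies in $\mathcal{M}$. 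Finite-well-completeness supplies exactly the intersections of strong monomorphisms needed both to form the limit stages and to force stabilization, after which $X \to Z_{\infty} \to Y$ is the desired factorization. This existence argument is the technical heart carried out by Cassidy, H\'{e}bert, and Kelly.
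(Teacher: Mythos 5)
The paper gives no proof of Theorem~\ref{chk thm} at all: it is imported wholesale from Cassidy--H\'{e}bert--Kelly~\cite{MR779198} (hence the attribution in the theorem's name), so there is no in-paper argument to compare against, and your proposal has to stand on its own as a reconstruction of the argument in~\cite{MR779198}. Its outer, formal layer is correct and standard: uniqueness is immediate from the axiom $\mathcal{M}=\mathcal{E}^{\downarrow}$ in Definition~\ref{def of fs}; $\mathcal{E}$ contains the isomorphisms, satisfies two-out-of-three, and is closed under retracts because $F$ is a functor; the inclusion $\mathcal{E}\subseteq\mathcal{E}^{\downarrow\uparrow}$ is formal; and the retract argument gives $\mathcal{M}^{\uparrow}\subseteq\mathcal{E}$ once factorizations are available. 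So, as you say, everything reduces to constructing $(\mathcal{E},\mathcal{M})$-factorizations.

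That construction is exactly where your proposal breaks, and the error is not cosmetic: you have the two halves of the construction interchanged. First, in the one-step factorization $X\rightarrow Y\times_{GFY}GFX\rightarrow Y$ of $f$, the second leg \emph{always} lies in $\mathcal{M}$: it is a pullback of $GFf$, which is a morphism between objects of $\mathcal{A}$, every such morphism lies in $\mathcal{E}^{\downarrow}$ (an easy adjunction computation), and right-lifting classes are stable under pullback. What can fail is that the comparison map $X\rightarrow Y\times_{GFY}GFX$ lies in $\mathcal{E}$; its lying in $\mathcal{E}$ for every $f$ is precisely Cassidy--H\'{e}bert--Kelly's semi-left-exactness. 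Your sketch asserts the opposite failure (``a single pullback\dots need not land in $\mathcal{M}$''), and your transfinite scheme inherits this inversion: you maintain ``$X\rightarrow Z_{\alpha}$ is in $\mathcal{E}$'' at every stage and hope that ``$Z_{\infty}\rightarrow Y$ is in $\mathcal{M}$'' falls out of stabilization of a chain of strong subobjects. But stabilization of strong subobjects provides no mechanism whatsoever for the limiting map to acquire the lifting property against $\mathcal{E}$, and strong monomorphisms need not lie in $\mathcal{M}$ at all: take $\mathcal{C}=\mathrm{Set}$ and $\mathcal{A}$ the terminal objects, so that $\mathcal{E}$ is all maps and $\mathcal{M}$ is only the isomorphisms, while the strong monomorphisms are the injections. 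For the same reason your strong-epi/strong-mono starting point certifies nothing. The workable iteration is the mirror image of yours: set $Z_{0}=Y$ and $Z_{\alpha+1}=Z_{\alpha}\times_{GFZ_{\alpha}}GFX$, so that each composite $Z_{\alpha}\rightarrow Y$ is \emph{automatically} in $\mathcal{M}$ (a composite of pullbacks of morphisms of $\mathcal{A}$); for $\alpha\geq 1$ the connecting maps $Z_{\alpha+1}\rightarrow Z_{\alpha}$ are pullbacks of $GF(X\rightarrow Z_{\alpha})$, which is a split, hence strong, monomorphism, so finite well-completeness supplies the limit stages and the total (possibly class-indexed) intersection $Z_{\infty}$, whose map to $Y$ stays in $\mathcal{M}$; and the one substantive thing left to prove---the actual content of~\cite{MR779198}---is that $F$ inverts the comparison map $X\rightarrow Z_{\infty}$. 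Your write-up never addresses that point, which is the heart of the theorem.
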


Now we can prove the main theorem of this subsection:
\begin{prop}\label{construction of model structure}
Let $\mathcal{C}$ be a finitely co-complete, finitely well-complete category, and let $\mathcal{A}$ be a replete reflective subcategory of $\mathcal{C}$ with reflector functor $F: \mathcal{C}\rightarrow\mathcal{A}$.
Then there exists a model structure on $\mathcal{C}$ in which:
\begin{itemize}
\item all maps are cofibrations (i.e., this model structure is a Bousfield localization of the discrete model structure),
\item the weak equivalences are the maps $f$ such that $Ff$ is an isomorphism, and
\item the fibrations are the maps with the right lifting property with respect to the weak equivalences.
\end{itemize}
\end{prop}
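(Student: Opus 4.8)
The plan is to let the Cassidy--H\'ebert--Kelly theorem (Theorem~\ref{chk thm}) do essentially all of the work. Applying that theorem to the reflector $F$ produces the unique factorization system $(\mathcal{E},\mathcal{M})$ on $\mathcal{C}$ in which $\mathcal{E}$ is exactly the class of maps $f$ with $Ff$ an isomorphism---that is, $\mathcal{E}$ is precisely the intended class of weak equivalences. I would then simply declare the three classes of the model structure to be: cofibrations $=$ all maps, weak equivalences $=\mathcal{E}$, and fibrations $=\mathcal{E}^{\downarrow}$. The factorization-system axiom $\mathcal{E}^{\downarrow}=\mathcal{M}$ shows that this class of fibrations coincides with $\mathcal{M}$, and, since the weak equivalences are $\mathcal{E}$, it is by definition the class of maps with the right lifting property against the weak equivalences, exactly as the statement requires. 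The underlying category has finite limits because $\mathcal{C}$ is finitely well-complete, and finite colimits by hypothesis, so the only remaining task is to verify Quillen's axioms.

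Before checking the axioms I would record two standard consequences of $(\mathcal{E},\mathcal{M})$ being a factorization system. First, $\mathcal{E}\cap\mathcal{M}$ is exactly the class of isomorphisms: any $f\in\mathcal{E}\cap\mathcal{M}$ lifts against itself, and the resulting lift is a two-sided inverse. Since the acyclic cofibrations are $\mathcal{E}\cap(\text{all maps})=\mathcal{E}$ and the acyclic fibrations are $\mathcal{E}\cap\mathcal{M}$, this identifies the acyclic fibrations with the isomorphisms, consistent with Lemma~\ref{when right-homotopic maps are left-homotopic}(1). Second, $\mathcal{E}=\mathcal{M}^{\uparrow}$ and $\mathcal{M}=\mathcal{E}^{\downarrow}$ are lifting classes, hence closed under retracts; the cofibrations are closed under retracts trivially.

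With these in hand I would check each axiom, and each becomes immediate. For the factorizations: the factorization system itself factors every map as a map in $\mathcal{E}$ (an acyclic cofibration) followed by a map in $\mathcal{M}$ (a fibration); and every map $f:X\rightarrow Y$ factors as the cofibration $f$ followed by the identity $\mathrm{id}_Y$, which is an isomorphism and hence an acyclic fibration. Two-out-of-three for weak equivalences holds because $F$ preserves composition and the isomorphisms of $\mathcal{A}$ satisfy two-out-of-three. For the two lifting characterizations: the acyclic fibrations are the isomorphisms, and every map has the left lifting property against every isomorphism, so ``cofibrations $=$ maps with the left lifting property against acyclic fibrations'' reduces to the tautology that all maps are cofibrations; and ``acyclic cofibrations $=$ maps with the left lifting property against fibrations'' is exactly the factorization-system identity $\mathcal{E}=\mathcal{M}^{\uparrow}$.

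I do not expect a genuine obstacle here: once Theorem~\ref{chk thm} supplies the factorization system whose left class is the prescribed class of weak equivalences, verifying the model axioms is a bookkeeping exercise resting on the two elementary facts above (that $\mathcal{E}\cap\mathcal{M}$ consists of isomorphisms, and that $\mathcal{E}$ and $\mathcal{M}$ are lifting classes). The only place demanding the slightest care is confirming that the acyclic fibrations really are the isomorphisms, since this is what trivializes both the cofibration/acyclic-fibration factorization and one of the two lifting axioms; but this is precisely the standard characterization of $\mathcal{E}\cap\mathcal{M}$. Note also that no functorial or cofibrantly generated factorization is required, in keeping with the conventions of this paper.
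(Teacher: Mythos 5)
Your proposal is correct and follows essentially the same route as the paper's own proof: Theorem~\ref{chk thm} supplies the acyclic-cofibration/fibration factorization, the trivial factorization $f = \mathrm{id}_Y \circ f$ handles the other one, and the remaining axioms reduce to the observations that the acyclic fibrations are exactly the isomorphisms and that $\mathcal{E}$ and $\mathcal{M}$ are closed under retracts. If anything, you are slightly more explicit than the paper at one point: where the paper says it is ``immediate from our definitions'' that the acyclic cofibrations are the maps with the left lifting property with respect to the fibrations, you correctly identify this as the factorization-system identity $\mathcal{E} = \mathcal{M}^{\uparrow}$ furnished by Theorem~\ref{chk thm}.
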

\begin{proof}
We check the axioms for being a model category (see Convention~\ref{def of model cat}):
\begin{itemize}
\item By assumption, $\mathcal{C}$ is finitely complete and finitely co-complete. 
\item It is a very easy exercise to show that the weak equivalences satisfy the two-out-of-three property.
\item It is trivial that cofibrations are closed under retracts. It is a very easy exercise to show that, in any category:
\begin{itemize}\item
a retract of an isomorphism is an isomorphism, and
\item a retract of a map with the right lifting property with respect to a collection of morphisms also has the right lifting property with respect to that collection of morphisms.\end{itemize}
Consequently weak equivalences and fibrations are each closed under retracts. 
\item Suppose $f: X\rightarrow Y$ is an acyclic fibration. Then $f$ belongs to both the collection of weak equivalences and the collection of maps which have the right lifting property with respect to the weak equivalences. It is a standard (and easy) exercise to show that the lifting property produces an inverse for $f$, so that $f$ is an isomorphism. Consequently the cofibrations are exactly the maps with the left lifting property with respect to the acyclic fibrations.
\item From our definitions, it is immediate that the acyclic cofibrations are exactly the maps with the left lifting property with respect to the fibrations.
\item Every map $f: X \rightarrow Y$ in $\mathcal{C}$ trivially factors as a cofibration (namely, $f$) followed by an acyclic fibration (namely, $\id_Y$).
\item Theorem~\ref{chk thm} gives us that every map in $\mathcal{C}$ factors as an acyclic cofibration followed by a fibration.
\end{itemize}
\end{proof}

\section{Conclusions.}

Now we give a definition of idempotent monads. This definition is equivalent to the many other well-known definitions of an idempotent monad; see for example Proposition~4.2.3 of~\cite{MR1313497}.
\begin{definition}
Let $\mathcal{C}$ be a category. A monad $T: \mathcal{C} \rightarrow \mathcal{C}, \eta: \id \rightarrow T, \mu: T\circ T \rightarrow T$ on $\mathcal{C}$ is said to be {\em idempotent} if, for all objects $X$ of $\mathcal{C}$, both maps $T\eta X,\eta TX: TX \rightarrow TTX$ are isomorphisms.
Two idempotent monads $(T,\eta,\mu),(T^{\prime},\eta^{\prime},\mu^{\prime})$ on $\mathcal{C}$ are {\em naturally equivalent} if there is a morphism of monads $(T,\eta,\mu)\rightarrow (T^{\prime},\eta^{\prime},\mu^{\prime})$ between them which is a natural equivalence of the underlying functors $T,T^{\prime}$.
\end{definition}

\begin{definition}
Suppose $\mathcal{C}$ is a finitely complete, finitely co-complete category.
We define three partially-ordered collections (not necessarily sets!) as follows:
\begin{itemize}
\item let $\Refl(\mathcal{C})$ be the collection of replete reflective subcategories of $\mathcal{C}$, ordered by inclusion,
\item let $\Loc(\mathcal{C})$ denote the collection of Bousfield localization model structures of the discrete model structure on $\mathcal{C}$, ordered by letting $X\leq Y$ if and only if the model structure $Y$ is a Bousfield localization of the model structure $X$,
\item and let $\Idem\Monads(\mathcal{C})$ denote the collection of natural equivalence classes of idempotent monads on $\mathcal{C}$, ordered by letting $[S]\leq [T]$ if and only if there exists a morphism of monads $S \rightarrow T$. (It is very easy to check that this ordering is well-defined.) 
\end{itemize}
\end{definition}
It follows from Proposition~\ref{bousfield loc model structures have refl fibrant replacement} that every model structure in $\Loc(\mathcal{C})$ has {\em functorial} fibrant replacement, even though we do not assume functoriality of factorization systems in our definition of a model category (see Convention~\ref{def of model cat}).

\begin{theorem}\label{main thm}
Let $\mathcal{C}$ be a finitely co-complete, finitely well-complete category. 
Then there exist order-preserving bijections
\[ \Refl(\mathcal{C})^{\op} \cong \Loc(\mathcal{C}) \cong \Idem\Monads(\mathcal{C}).\]
These bijections have the following properties:
\begin{itemize}
\item Given a reflective replete subcategory $\mathcal{A}\in \Refl(\mathcal{C})$ with reflector functor $F:\mathcal{C}\rightarrow\mathcal{A}$, its associated model structure in $\Loc(\mathcal{C})$ has the following properties:
 \begin{itemize}
 \item all maps are cofibrations,
 \item the weak equivalences are the maps $f$ such that $Ff$ is an isomorphism,
 \item the fibrations are the maps with the right lifting property with respect to the weak equivalences,
 \item the fibrant objects are exactly the objects of $\mathcal{A}$,
 \item $F$ is a fibrant replacement functor for this model structure (and in fact all fibrant replacement functors for this model structure are naturally isomorphic to $F$), and 
 \item the homotopy category of this model structure is equivalent to $\mathcal{A}$.
 \end{itemize}
 The idempotent monad associated to $\mathcal{A}$ is simply the monad associated to the adjunction $F\dashv G$, where $G: \mathcal{A}\hookrightarrow\mathcal{C}$ is the inclusion functor.
\item Given a model structure $X$ in $\Loc(\mathcal{C})$, its associated reflective replete subcategory $\mathcal{A}$ of $\mathcal{C}$ is simply the full subcategory generated by the fibrant objects of $\mathcal{C}$, with reflector functor given by fibrant replacement. 
\item Given a natural equivalence class of idempotent monads on $\mathcal{C}$, choose an idempotent monad  $(T,\eta,\mu)$ in the class. The reflective replete subcategory of $\mathcal{C}$ associated to that natural equivalence class is the essential image of $T$, with reflector functor given by $T$. Its associated model structure in $\Loc(\mathcal{C})$ has the following properties:
 \begin{itemize}
 \item all maps are cofibrations,
 \item the weak equivalences are the maps $f$ such that $Tf$ is an isomorphism,
 \item the fibrations are the maps with the right lifting property with respect to the weak equivalences,
 \item the fibrant objects are those in the essential image of $T$,
 \item $T$ is a fibrant replacement functor for this model structure (and in fact all fibrant replacement functors for this model structure are naturally isomorphic to $T$), and 
 \item the homotopy category of this model structure is equivalent to the essential image of $T$.
 \end{itemize}
\end{itemize}
\end{theorem}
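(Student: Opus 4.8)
The plan is to exhibit the two bijections as mutually inverse assignments and then verify monotonicity and the listed structure. Both assignments are already constructed: Proposition~\ref{construction of model structure} sends a replete reflective $\mathcal{A}$ with reflector $F$ to a model structure in $\Loc(\mathcal{C})$, and Proposition~\ref{bousfield loc model structures have refl fibrant replacement} sends a localization to the full subcategory $\mathcal{A}$ of fibrant objects, which is replete reflective with reflector the fibrant replacement functor $P$ of Proposition~\ref{fibrant replacement functor exists}. So the real work is to check that these are inverse, order-compatible, and match the monad picture.

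The linchpin, which I would prove first, is that in the model structure built from $\mathcal{A}$ the fibrant objects are exactly the objects of $\mathcal{A}$. For $X\in\mathcal{A}$, the adjunction isomorphism $\hom_{\mathcal{C}}(-,X)\cong\hom_{\mathcal{A}}(F-,X)$ turns any weak equivalence $g$ (i.e.\ $Fg$ an isomorphism) into a bijection $\hom_{\mathcal{C}}(\mathrm{cod}\,g,X)\to\hom_{\mathcal{C}}(\mathrm{dom}\,g,X)$, whose surjectivity supplies the lift showing $X\to 1$ is a fibration. Conversely, if $X$ is fibrant, then the unit $\eta_X\colon X\to FX$ is a weak equivalence with fibrant codomain $FX\in\mathcal{A}$, so lifting $\id_X$ over $X\to 1$ along $\eta_X$ produces a retraction $r$ with $r\eta_X=\id_X$; since $\id_{FX}$ and $\eta_X r$ both fill the fibrant-replacement diagram of $\id_X$, the uniqueness clause of Proposition~\ref{fibrant replacement functor exists} forces $\eta_X r=\id_{FX}$, so $\eta_X$ is an isomorphism and $X\in\mathcal{A}$ by repleteness. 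This identification immediately yields the asserted properties on the $\Refl\to\Loc$ side: the cofibrations, weak equivalences, and fibrations are as in Proposition~\ref{construction of model structure} by construction; $\eta_X$ exhibits $FX$ as a fibrant replacement of $X$, so $F$ is a fibrant replacement functor, unique up to natural isomorphism again by Proposition~\ref{fibrant replacement functor exists}; and since every object is cofibrant while the fibrant objects are those of $\mathcal{A}$, the homotopy category is the category of objects of $\mathcal{A}$ with homotopy classes of maps, which by Lemma~\ref{when right-homotopic maps are left-homotopic} (homotopic maps into a fibrant object are equal) is just $\mathcal{A}$.

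Next I would close the two round trips. Starting from $\mathcal{A}$, forming the model structure and taking fibrant objects returns $\mathcal{A}$ by the paragraph above. Starting from a localization $M$ with fibrant replacement $P$ and rebuilding via Proposition~\ref{construction of model structure} gives a model structure $M'$ with the same cofibrations (all maps) and with weak equivalences $\{f:Pf\text{ iso}\}$; I must match these with the weak equivalences of $M$. If $Pf$ is an isomorphism then naturality of $\eta$ and two-out-of-three force $f$ to be a weak equivalence of $M$; conversely, if $f$ is a weak equivalence of $M$ then $Pf$ is a weak equivalence between fibrant objects, and factoring $Pf$ as an acyclic cofibration followed by a fibration, noting the fibration is an acyclic fibration hence an isomorphism by Lemma~\ref{when right-homotopic maps are left-homotopic}, and applying the retraction-plus-uniqueness argument above to the remaining acyclic cofibration, shows $Pf$ is an isomorphism. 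Hence $M=M'$, and the two assignments are inverse. For monotonicity, if $\mathcal{A}\subseteq\mathcal{A}'$ then the reflector factors as $F\cong\overline{F}F'$, so $F'f$ an isomorphism implies $Ff$ an isomorphism; thus $M_{\mathcal{A}'}$ has no more weak equivalences than $M_{\mathcal{A}}$, i.e.\ $M_{\mathcal{A}'}\leq M_{\mathcal{A}}$, which is exactly order-preservation for $\Refl(\mathcal{C})^{\op}\to\Loc(\mathcal{C})$; the reverse implication follows because fibrant objects are detected by a right lifting property that is monotone in the class of weak equivalences.

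Finally I would fold in the idempotent monads, which is the classical correspondence. To $\mathcal{A}$ I assign the monad $GF$ of the adjunction $F\dashv G$, which is idempotent because $\mathcal{A}$ is full reflective; to an idempotent monad $(T,\eta,\mu)$ I assign its essential image $\{X:\eta_X\text{ iso}\}$, a replete reflective subcategory with reflector $T$ (see~\cite{MR1313497}). These are mutually inverse up to natural equivalence, and the order matches: an inclusion $\mathcal{A}\supseteq\mathcal{A}'$ produces, by the universal property of reflection, a monad morphism $GF\to G'F'$, and conversely a monad morphism restricts the subcategories of algebras, so $\mathcal{A}\supseteq\mathcal{A}'$ holds iff $[GF]\leq[G'F']$. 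Combined with the previous paragraph this yields all three order-preserving bijections together with the stated properties on the monad side. The single genuinely homotopy-theoretic obstacle in the whole argument is the identification of fibrant objects with $\mathcal{A}$, together with the companion fact that a weak equivalence between fibrant objects is an isomorphism; both are handled by the uniqueness of fibrant replacement from Proposition~\ref{fibrant replacement functor exists}, so once that is in hand the remaining verifications are routine.
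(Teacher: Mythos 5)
Your proposal is correct, and its skeleton is the same as the paper's: exhibit the bijection $\Refl(\mathcal{C})^{\op}\cong\Loc(\mathcal{C})$ as the mutually inverse assignments of Propositions~\ref{construction of model structure} and~\ref{bousfield loc model structures have refl fibrant replacement}, close the two round trips by comparing weak equivalences, and quote the classical reflective-subcategory/idempotent-monad correspondence for $\Idem\Monads(\mathcal{C})$. Within that skeleton you execute two steps differently, and both differences are worth noting. First, you give an actual proof that the fibrant objects of the model structure built from $\mathcal{A}$ are exactly the objects of $\mathcal{A}$ (adjunction bijection for one inclusion; for the other, the unit $\eta_X$ is an acyclic cofibration, lifting gives a retraction, and the uniqueness clause of Proposition~\ref{fibrant replacement functor exists} forces the retraction to be a two-sided inverse); the paper merely asserts this identification when it closes the first round trip. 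Second, at the one genuinely homotopy-theoretic step---showing a weak equivalence $f$ of $\mathcal{M}$ has $Pf$ invertible---the paper cites Whitehead's theorem (Lemma~4.24 of~\cite{MR1361887}: a weak equivalence between fibrant-cofibrant objects is a homotopy equivalence) and then rigidifies the homotopy inverse using Lemma~\ref{when right-homotopic maps are left-homotopic}, whereas you factor $Pf$ as an acyclic cofibration followed by a fibration, note the fibration is an isomorphism by Lemma~\ref{when right-homotopic maps are left-homotopic}(1), and dispose of the acyclic cofibration by the same retraction-plus-uniqueness device. Your route trades the external citation for a reusable internal argument; the paper's is shorter on the page but leans on quoted machinery. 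You also spell out order-preservation in both directions, which the paper leaves implicit. Two cosmetic points to tighten: in the round trip you should remark that the intermediate object of your factorization is fibrant (it is isomorphic to the fibrant object $PY$, and an isomorphism composed with a fibration is a fibration), and in the monad paragraph you should record that for an idempotent monad the essential image coincides with $\{X:\eta_X\ \text{an isomorphism}\}$ and that this class is replete (both follow from naturality of $\eta$ and idempotency).
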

\begin{proof}
The equivalence between idempotent monads and reflective subcategories is classical; see e.g. Proposition~4.3.2 of~\cite{MR1313497}, or~\cite{MR0210125}. Briefly, to each idempotent monad $(T,\eta,\mu)$ one associates the reflective category consisting of the image of $T$, and to each reflective category one associates the idempotent monad given by the adjunction of the reflector and the inclusion of the subcategory. Each natural equivalence class of idempotent monads contains an idempotent monad whose image is replete; consequently we get an order-preserving bijection between the replete reflective subcategories and the natural equivalence classes of idempotent monads.

The equivalence between $\Refl(\mathcal{C})^{\op}$ and $\Loc(\mathcal{C})$
is given by Proposition~\ref{construction of model structure} and Proposition~\ref{bousfield loc model structures have refl fibrant replacement}.  Beginning with a reflective replete subcategory $\mathcal{A}$ of $\mathcal{C}$, the associated model structure (constructed in Proposition~\ref{construction of model structure}) has fibrant objects the objects of $\mathcal{A}$; the reflective subcategory of $\mathcal{C}$ (constructed in Proposition~\ref{bousfield loc model structures have refl fibrant replacement}) associated to this model structure is $\mathcal{A}$ again. 

Similarly, if we begin with a model structure $\mathcal{M}$ in $\Loc(\mathcal{C})$, let $\mathcal{M}^{\prime}$ denote the model structure on $\mathcal{C}$ associated (by Proposition~\ref{construction of model structure}) to the reflective subcategory of $\mathcal{C}$ associated (by Proposition~\ref{bousfield loc model structures have refl fibrant replacement}) to $\mathcal{M}$. Clearly all maps are cofibrations in both model structures.%, so $\cof \mathcal{M} =\cof \mathcal{M}^{\prime}$. 
Let $F$ denote a reflector functor from $\mathcal{C}$ to the full subcategory generated by the fibrant objects, and let $\eta: \id \rightarrow F$ denote the natural transformation of the reflector. The weak equivalences in $\mathcal{M}^{\prime}$ are the maps inverted by $F$. If $f: X \rightarrow Y$ is a weak equivalence in $\mathcal{M}$, then we have the commutative square
\begin{equation}\label{comm sq 43438}\xymatrix{
 X \ar[r]^f \ar[d]_{\eta X} & Y \ar[d]^{\eta Y} \\
 FX \ar[r]_{Ff} & FY,
}\end{equation}
and since $f$ and $\eta X$ and $\eta Y$ are all weak equivalences, the two-out-of-three property implies that $Ff$ is also a weak equivalence. Hence $Ff$ is a weak equivalence between fibrant-cofibrant objects, hence (see e.g. Lemma~4.24 of~\cite{MR1361887}) there exists an inverse for $Ff$ up to left (equivalently, right) homotopy in the model structure $\mathcal{M}$. Now Lemma~\ref{when right-homotopic maps are left-homotopic} implies that there exists an inverse for $Ff$ ``on the nose,'' not just up to homotopy. Hence $f$ is inverted by $F$, hence the weak equivalences in $\mathcal{M}$ are contained in the weak equivalences in $\mathcal{M}^{\prime}$. 

Conversely, if $f: X \rightarrow Y$ is a weak equivalence in $\mathcal{M}^{\prime}$,
then the commutativity of the square~\ref{comm sq 43438}, together with $Ff$ being an isomorphism, $\eta X$ and $\eta Y$ being weak equivalences in $\mathcal{M}$, and the two-out-of-three property of weak equivalences, implies that $f$ is a weak equivalence in $\mathcal{M}$. Hence the weak equivalences in $\mathcal{M}^{\prime}$ are contained in the weak equivalences in $\mathcal{M}$.

So $\mathcal{M}$ and $\mathcal{M}^{\prime}$ have the same cofibrations and the same weak equivalences; since the fibrations are determined by the acyclic cofibrations (see Convention~\ref{def of model cat}), we now know that $\mathcal{M} = \mathcal{M}^{\prime}$, and so the function
$\Refl(\mathcal{C})^{\op} \rightarrow \Loc(\mathcal{C})$ constructed in Proposition~\ref{construction of model structure} and the function $\Loc(\mathcal{C}) \rightarrow \Refl(\mathcal{C})^{\op}$ constructed in Proposition~\ref{bousfield loc model structures have refl fibrant replacement} are mutually inverse.

That the homotopy category of any model structure in $\Loc(\mathcal{C})$ is equivalent to its associated reflective replete subcategory is a consequence of Lemma~\ref{two maps between fibrant replacements are right-homotopic}: the homotopy category of the model structure is equivalent to the category obtained by taking the full subcategory of $\mathcal{C}$ generated by the fibrant-cofibrant objects, and imposing the equivalence relation on maps given by left, equivalently right, homotopy. But by Lemma~\ref{two maps between fibrant replacements are right-homotopic}, for model structures in $\Loc(\mathcal{C})$, two maps with fibrant codomain are right-homotopic if and only if they are equal. Hence $\Ho(\mathcal{C})$ is equivalent to the full subcategory of $\mathcal{C}$ generated by the fibrant objects.
\end{proof}

Here is a specific application to the category of modules over a commutative ring $R$.
\begin{corollary}\label{specific application}
Let $R$ be a commutative ring and let $\Mod(R)$ be the category of $R$-modules, equipped
with the discrete model structure.
Let $S$ be a commutative $R$-algebra. Then there exists a Bousfield localization of 
$\Mod(R)$ in which the base change functor $-\otimes_R S$ is a fibrant replacement monad
if and only if the natural multiplication map $S\otimes_R S \rightarrow S$
is an isomorphism.
%the map $R\rightarrow S$ of commutative rings is surjective, i.e., if and only if
%$S$ is the quotient of $R$ by some ideal in $R$.
\end{corollary}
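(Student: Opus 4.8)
The plan is to reduce the statement to the idempotency criterion for fibrant replacement monads. By Theorem~\ref{main thm} (equivalently, the second item of Answers~\ref{the answers}), a monad on a finitely co-complete, finitely well-complete category is the fibrant replacement monad of some Bousfield localization of the discrete model structure if and only if it is idempotent. The category $\Mod(R)$ is complete, cocomplete, and well-powered, so by Remark~\ref{remark on fwc property} it is finitely well-complete, and it is plainly finitely co-complete; hence Theorem~\ref{main thm} applies. Therefore it suffices to prove that the base change monad $T = -\otimes_R S$ is idempotent if and only if the multiplication $m\colon S\otimes_R S\rightarrow S$ is an isomorphism.

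First I would record the monad structure on $T$: the unit $\eta_M\colon M\rightarrow M\otimes_R S$ sends $x\mapsto x\otimes 1$, and the multiplication $\mu_M\colon M\otimes_R S\otimes_R S\rightarrow M\otimes_R S$ is $\id_M\otimes_R m$ under the associativity identification $M\otimes_R S\otimes_R S = M\otimes_R(S\otimes_R S)$. For the easy direction, suppose $m$ is an isomorphism; then $\mu_M=\id_M\otimes_R m$ is an isomorphism for every $M$, and the monad unit laws
\[ \mu_M\circ T\eta_M = \id_{TM} = \mu_M\circ\eta_{TM} \]
force $T\eta_M=\eta_{TM}=\mu_M^{-1}$, so both maps appearing in the paper's definition of idempotence are isomorphisms and $T$ is idempotent. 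For the converse, suppose $T$ is idempotent; I only need the instance $M=R$. Under the identifications $TR\cong S$ and $TTR\cong S\otimes_R S$, the map $\eta_{TR}$ becomes $s\mapsto s\otimes 1$ while $\mu_R$ becomes $m$, and idempotence makes $\eta_{TR}$ an isomorphism; the unit law $\mu_R\circ\eta_{TR}=\id$ then exhibits $m=\eta_{TR}^{-1}$ as an isomorphism.

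There is no deep obstacle here; the only genuine content is the bookkeeping that identifies $\mu_R$ with the algebra multiplication $m$ and the observation that the single instance $M=R$ already recovers $m$. The one point requiring a little care is reconciling the paper's definition of an idempotent monad (via $T\eta$ and $\eta T$ being isomorphisms) with the statement about $\mu$; the monad unit laws bridge the two, as used above. One should likewise keep in mind that ``fibrant replacement monad'' is meant in the monad sense, so that one is entitled to conclude idempotence of $T$ itself in the forward direction---this is exactly what Proposition~\ref{bousfield loc model structures have refl fibrant replacement} and Theorem~\ref{main thm} guarantee.
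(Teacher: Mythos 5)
Your proposal is correct, and it follows the same first step as the paper's proof (invoke Theorem~\ref{main thm} to translate the model-category statement into pure category theory on $\Mod(R)$), but the algebraic reduction you carry out afterwards is genuinely different. The paper reduces to the statement that the forgetful functor $U\colon\Mod(S)\rightarrow\Mod(R)$ must be \emph{full}, then invokes the standard criterion (Theorem~IV.3.1 of~\cite{MR1712872}) that fullness of a faithful right adjoint is equivalent to the counit $M\otimes_R S\rightarrow M$ being an isomorphism for every $S$-module $M$, and finally identifies that counit with $M\otimes_S\nabla$. You instead work directly with the monad structure maps: you characterize idempotency of $T=-\otimes_R S$ via the unit laws, observing that when $\mu$ is invertible the unit laws force $T\eta=\eta T=\mu^{-1}$, and in the converse direction you note that the single instance $M=R$ already identifies $\mu_R$ with the multiplication $\nabla\colon S\otimes_R S\rightarrow S$ and exhibits it as the inverse of the isomorphism $\eta_{TR}$. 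Your route is more self-contained (no appeal to Mac Lane's counit criterion, no need to identify $\Mod(S)$ with the category of algebras or with the subcategory of fibrant objects), at the cost of a little explicit bookkeeping with the monad axioms; the paper's route is shorter on computation but leans on the classical full-faithfulness/counit dictionary. One point you handle correctly that deserves emphasis: the ``only if'' direction needs the fact that a fibrant replacement \emph{monad} on a Bousfield localization of the discrete model structure is necessarily idempotent, which is not a literal sentence of Theorem~\ref{main thm} but follows from Proposition~\ref{bousfield loc model structures have refl fibrant replacement} (the unit of such a monad exhibits $T$ as a reflector onto the full subcategory of fibrant objects, whence $T\eta$ and $\eta T$ are isomorphisms); the paper's own proof relies on exactly the same implicit unwinding when it asserts that $U$ ``must be a full functor,'' so your proof is no less rigorous on this point than the original.
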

\begin{proof}
By Theorem~\ref{main thm}, in order for the base change functor to be a fibrant replacement,
the forgetful functor from $\Mod(S)$ to $\Mod(R)$ must be a full functor.
Since it is clearly faithful, its fullness is equivalent to the counit
$M\otimes_R S\rightarrow M$ of the adjunction being an isomorphism for every $S$-module $M$ (see for example Theorem~IV.3.1 of~\cite{MR1712872}). 
This counit is simply the tensor product of $M$ with the multiplication map 
$S\otimes_R S\stackrel{\nabla}{\longrightarrow} S$. 
So, for the counit to be an isomorphism, it is necessary and sufficient that
$\nabla$ be an isomorphism.
%The counit morphism
%$M\otimes_R S\rightarrow M$ being an isomorphism for every $S$-module $M$
%is equivalent to $\nabla$ being an isomorphism of $S$-modules.
%Clearly $\nabla$ is surjective. Suppose the map of commutative rings $R\rightarrow S$
%is not injective, and choose an element $s$ which is not in the image of $R$.
%Then the element $s\otimes 1 - 1 \otimes s \in S\otimes_R S$ is nonzero but its image
%in $S$ is zero. Conversely, if $R\rightarrow S$ is surjective, it is easy to see that
%$\nabla$ is an isomorphism. Finally, whenever $R\rightarrow S$ is surjective it is
%easy to see that the category of $S$-modules is closed under isomorphism in the
%category of $R$-modules. 
\end{proof}

\begin{remark}\label{remark on co-fwc prop}
Every argument used in the proof of Theorem~\ref{main thm} dualizes; in particular, we did not use a cofibrant generation assumption or small object argument anywhere. Consequently, if $\mathcal{C}$ is finitely complete and finitely co-well-complete, then Theorem~\ref{main thm} also gives us order-preserving bijections
\begin{equation}\label{dual bijections} \left(\co\Refl(\mathcal{C})\right)^{\op} \cong \co\Loc(\mathcal{C}) \cong \Idem\co\Monads(\mathcal{C})\end{equation}
between the partially-ordered collection of coreflective replete subcategories of $\mathcal{C}$, the partially-ordered collection of Bousfield colocalizations of the discrete model structure on $\mathcal{C}$, and the partially-ordered collection of natural equivalence classes of idempotent comonads on $\mathcal{C}$.

A dual version of Remark~\ref{remark on fwc property} shows easily that a co-complete, co-well-powered category is finitely co-well-complete, so again, the bijections~\ref{dual bijections} hold in most categories of everyday practical mathematical experience (sets, groups, rings, topological spaces, etc.), and again, there are also plenty of finitely co-well-complete categores even aside from those that are co-complete and co-well-powered, for example, the category of finite sets.
\end{remark}

\begin{lemma}\label{maps between fibrant objs are fibs}
Let $\mathcal{C}$ be a model category whose model structure is a Bousfield localization of the discrete model structure. Then every map between fibrant objects in $\mathcal{C}$ is a fibration.
\end{lemma}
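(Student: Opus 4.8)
The plan is to verify the lifting characterization of fibrations directly. The fibrations are exactly the maps with the right lifting property against the acyclic cofibrations (this is how the fibrations are determined by the acyclic cofibrations, cf. Convention~\ref{def of model cat}); and since every map in $\mathcal{C}$ is a cofibration, the acyclic cofibrations are precisely the weak equivalences. So, given a map $f\colon X\rightarrow Y$ between fibrant objects, I would take an arbitrary acyclic cofibration $i\colon A\rightarrow B$ together with a commutative square having $i$ on the left, $f$ on the right, top edge $u\colon A\rightarrow X$, and bottom edge $v\colon B\rightarrow Y$, and try to produce a diagonal filler $B\rightarrow X$.

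First I would produce a candidate filler, ignoring the lower triangle. Since $X$ is fibrant, the map $X\rightarrow 1$ is a fibration, and $i$ is an acyclic cofibration, so $i$ has the left lifting property against $X\rightarrow 1$; lifting $u$ through $i$ yields a map $w\colon B\rightarrow X$ with $w\circ i=u$. The upper triangle then commutes on the nose, and the whole problem reduces to the single assertion that $f\circ w=v$, i.e.\ that the lower triangle commutes as well.

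The main obstacle is exactly this commutativity, and it is where the fibrancy of $Y$ must be used. The point is that $f\circ w$ and $v$ are two maps $B\rightarrow Y$ which agree after precomposition with the acyclic cofibration $i$, since $(f\circ w)\circ i=f\circ u=v\circ i$, and I would show that any two such maps into a fibrant object are equal. To see this I would mimic the path-object argument of Lemma~\ref{two maps between fibrant replacements are right-homotopic}: factor the diagonal of $Y$ as an acyclic cofibration $Y\rightarrow\Path(Y)$ followed by a fibration $\Path(Y)\rightarrow Y\times Y$, and consider the square whose left edge is $i$, whose right edge is $\Path(Y)\rightarrow Y\times Y$, whose bottom edge is $(f\circ w,\,v)\colon B\rightarrow Y\times Y$, and whose top edge sends $A$ into $\Path(Y)$ via the common composite $f\circ u$ followed by $Y\rightarrow\Path(Y)$. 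This square commutes because $Y\rightarrow\Path(Y)$ is a section of the diagonal; since $i$ is an acyclic cofibration and $\Path(Y)\rightarrow Y\times Y$ is a fibration, a diagonal filler exists, and it is precisely a right homotopy from $f\circ w$ to $v$.

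Finally, because every object of $\mathcal{C}$ is cofibrant (the map from the initial object is a cofibration) and $Y$ is fibrant, Lemma~\ref{when right-homotopic maps are left-homotopic}(3) upgrades this right homotopy to an equality $f\circ w=v$. Hence $w$ solves the original lifting problem, and since $i$ was an arbitrary acyclic cofibration, $f$ has the right lifting property against all acyclic cofibrations and is therefore a fibration. I expect the only delicate point to be the bookkeeping in the path-object square; everything else is a direct application of the lifting axioms and the already-established rigidity of maps into fibrant objects.
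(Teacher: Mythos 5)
Your proof is correct, but it takes a genuinely different route from the paper's. The paper proves this lemma by invoking the reflective-subcategory machinery already established: it factors the lifting square through the reflector $F$ onto the fibrant objects (using Proposition~\ref{bousfield loc model structures have refl fibrant replacement}), notes that $Ff$ is an isomorphism because $F$ inverts exactly the weak equivalences (Theorem~\ref{main thm}), and writes down the explicit diagonal filler $a^{\prime}\circ (Ff)^{-1}\circ \eta W$. You instead argue entirely from the model category axioms plus the rigidity statement of Lemma~\ref{when right-homotopic maps are left-homotopic}: produce a candidate filler by lifting against $X\rightarrow 1$ (using only fibrancy of $X$), observe that the lower triangle's two composites $f\circ w$ and $v$ agree after precomposition with the acyclic cofibration $i$, exhibit a right homotopy between them via the path-object square (whose commutativity you correctly verify using $v\circ i = f\circ u$ and the fact that $Y\rightarrow \Path(Y)$ is a section of the diagonal), and conclude equality from Lemma~\ref{when right-homotopic maps are left-homotopic}(3) since $Y$ is fibrant. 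Your approach is more self-contained --- it needs nothing from Sections~3 or~4, in particular neither the Cassidy--H\'{e}bert--Kelly theorem nor the classification in Theorem~\ref{main thm} --- and it closely parallels the technique the paper itself uses in Lemma~\ref{two maps between fibrant replacements are right-homotopic}. What the paper's route buys in exchange is an explicit, canonical formula for the lift in terms of the reflector, which meshes with the functoriality statements elsewhere in the paper; what yours buys is logical independence, which would matter if one wanted this lemma earlier in the development (e.g., as an ingredient rather than a consequence of the main theorem). Both proofs rely on the same closed-model-category fact at the end, namely that a map with the right lifting property against all acyclic cofibrations is a fibration, which the paper also uses without further comment.
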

\begin{proof}
Write $\mathcal{A}$ for the full subcategory of $\mathcal{C}$ generated by the fibrant objects. By Proposition~\ref{bousfield loc model structures have refl fibrant replacement}, $\mathcal{A}$ is a reflective replete subcategory of $\mathcal{C}$. Write $F: \mathcal{C} \rightarrow\mathcal{A}$ for a reflector functor, $\eta: \id \rightarrow F$ for the natural transformation of the reflector, and suppose that $f: V \rightarrow W$ is a weak equivalence in $\mathcal{C}$ and $g: X \rightarrow Y$ is a map between fibrant objects in $\mathcal{C}$ fitting into a commutative square
\begin{equation}\label{comm sq 43439} \xymatrix{
 V \ar[d]_f \ar[r]^a & X \ar[d]_g \\ W \ar[r]_b & Y .
}\end{equation}
 Then we have a commutative diagram
\[\xymatrix{
 V\ar@/^/[rr]^a \ar[d]_f \ar[r]_{\eta V} & FV \ar[d]_{Ff} \ar[r]_{a^{\prime}} & X \ar[d]^g \\
 W\ar@/_/[rr]_b          \ar[r]^{\eta W} & FW            \ar[r]^{b^{\prime}} & Y 
}\]
and $Ff$ is an isomorphism by Theorem~\ref{main thm}.
It is easy to check that $a^{\prime}\circ (Ff)^{-1} \circ \eta W: W \rightarrow X$ fills in the square~\ref{comm sq 43439} and makes it commute. 

Hence maps between fibrant objects have the right lifting property with respect to all acyclic cofibrations. Hence maps between fibrant objects are fibrations.
\end{proof}

\begin{theorem}\label{k-thy thm}
Let $\mathcal{C}$ be a finitely complete, finitely co-complete category, and suppose $\mathcal{C}$ is equipped with a model structure which is a Bousfield localization {\em or} a Bousfield colocalization of the discrete model structure. Suppose furthermore that $\mathcal{C}$ is pointed, so that the model structure defines the structure of a Waldhausen category on the full subcategory of $\mathcal{C}$ generated by the cofibrant objects. Then the Waldhausen $K$-theory of $\mathcal{C}$ is trivial, that is, $K_*(\mathcal{C}) = \pi_*\left( \Omega \left| wS_{\bullet}\mathcal{C}\right|\right) \cong 0$.
\end{theorem}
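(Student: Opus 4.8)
The plan is to reduce both cases to one clean statement and then invoke Waldhausen's Additivity Theorem. First I would identify the relevant Waldhausen category $\mathcal{W}$ and isolate the one property that matters: that \emph{every} morphism of $\mathcal{W}$ is a cofibration. In the Bousfield localization case this is immediate, since localization leaves the cofibrations unchanged, so they remain all maps, and every object is cofibrant; thus $\mathcal{W} = \mathcal{C}$. In the Bousfield colocalization case $\mathcal{W}$ is the full subcategory of cofibrant objects, which (dually to Proposition~\ref{bousfield loc model structures have refl fibrant replacement}) is a coreflective replete subcategory $\mathcal{B}$ containing the zero object, and the assertion that every map between cofibrant objects is a cofibration is precisely the dual of Lemma~\ref{maps between fibrant objs are fibs}. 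In either case $K_*(\mathcal{C}) = \pi_* K(\mathcal{W})$, where $\mathcal{W}$ is a pointed Waldhausen category in which all morphisms are cofibrations.

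Next I would feed this into the Additivity Theorem. Let $\mathbf{0}\colon \mathcal{W}\to\mathcal{W}$ be the constant functor at the zero object. For every object $X$ the canonical map $X\to 0$ is a cofibration (all maps are), and since $0$ is terminal its cofiber, the pushout of $0\leftarrow X\to 0$, is automatically $0$; these data assemble into a cofiber sequence of exact endofunctors $\id_{\mathcal{W}} \to \mathbf{0} \to \mathbf{0}$, in which the first arrow is the natural cofibration $X\to 0$ and the second is the identity quotient map. Here $\mathbf{0}$ factors through the terminal Waldhausen category, so it is exact and induces the null map on $K$-theory. The Additivity Theorem then yields, in the group $[K(\mathcal{W}),K(\mathcal{W})]$ of homotopy classes, the relation $[\mathbf{0}] = [\id_{\mathcal{W}}] + [\mathbf{0}]$, whence $[\id_{\mathcal{W}}]=0$. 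Since $K(\mathcal{W}) = \Omega\lvert wS_{\bullet}\mathcal{W}\rvert$ is a grouplike $H$-space whose identity map is null-homotopic, it is contractible, and therefore $K_*(\mathcal{C})=0$.

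The real content lies entirely in the first paragraph: the Additivity input $\id_{\mathcal{W}}\to\mathbf{0}\to\mathbf{0}$ is trivial to write down \emph{once} one knows that every morphism is a cofibration, so the only genuine obstacle is establishing that hypothesis in the colocalization case. This is also where one must be careful, because Waldhausen $K$-theory is built from cofibrations and is not self-dual, so the localization argument cannot simply be dualized term-by-term; instead the colocalization case has to be reduced to the cofibrant subcategory $\mathcal{B}$, where the dual of Lemma~\ref{maps between fibrant objs are fibs} restores the property that all maps are cofibrations, and one checks that the distinguished zero object and the cofiber of $X\to 0$ are supplied by the Waldhausen axioms. As a sanity check, the degree-zero shadow of the argument is visible directly: the cofiber sequence $X\to 0\to 0$ forces $[X]=0$ for every object $X$, so $K_0(\mathcal{W})=0$, consistent with the contractibility proved above.
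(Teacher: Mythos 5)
Your proposal is correct and follows essentially the same route as the paper: in the localization case all maps are cofibrations, in the colocalization case the dual of Lemma~\ref{maps between fibrant objs are fibs} shows all maps between cofibrant objects are cofibrations, and then the cofiber sequence of exact functors $\id \rightarrow 0 \rightarrow 0$ together with Waldhausen's additivity theorem makes the identity on $wS_{\bullet}$ null-homotopic, forcing all $K$-groups to vanish. The only (harmless) difference is presentational --- you phrase the additivity step as an equation $[\mathbf{0}]=[\id]+[\mathbf{0}]$ in the group of homotopy classes of maps into the grouplike $H$-space $K(\mathcal{W})$, while the paper works directly with homotopies of maps $wS_{\bullet}\mathcal{C}\rightarrow wS_{\bullet}\mathcal{C}$ --- and you make explicit the small point, left implicit in the paper, that the zero object is cofibrant so the constant functor lands in the Waldhausen category.
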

\begin{proof}
First suppose that $\mathcal{C}$ is a Bousfield localization of the discrete model structure. Then all objects are cofibrant, and we have the exact (in the sense of Waldhausen) functor $1: \mathcal{C} \rightarrow \mathcal{C}$ sending every object of $\mathcal{C}$ to the terminal object, and
sending every morphism to the identity morphism on the terminal object. Furthermore, since all maps in $\mathcal{C}$ are cofibrations, we have a
cofiber sequence of exact functors $\mathcal{C}\rightarrow \mathcal{C}$:
\[ \id_\mathcal{C} \rightarrow 1 \rightarrow 1.\]
Now by Waldhausen's additivity theorem (Proposition~1.3.2 and Theorem~1.4.2 of \cite{MR802796}), the functors
\[ (wS_{\bullet}\id_{\mathcal{C}}) \vee (wS_{\bullet}1), wS_{\bullet}1 : wS_{\bullet} \mathcal{C}\rightarrow wS_{\bullet} \mathcal{C} \]
are homotopic. Hence the identity map on $wS_{\bullet}\mathcal{C}$ is nulhomotopic, hence $wS_{\bullet}\mathcal{C}$ is contractible, hence
$K_n(\mathcal{C}) = \pi_n\Omega\left| wS_{\bullet} \mathcal{C}\right| \cong 0$ for all $n$.

Now suppose that $\mathcal{C}$ is a Bousfield colocalization of the discrete model structure. By the dual of Lemma~\ref{maps between fibrant objs are fibs}, all maps between cofibrant objects in $\mathcal{C}$ are cofibrations.
So the same argument as above, using Waldhausen's additivity theorem, implies contractibility of
$wS_{\bullet} W$, hence triviality of all $K$-groups.
\end{proof}

\bibliography{/home/asalch/texmf/tex/salch}{}
\bibliographystyle{plain}
\end{document}